\documentclass[12pt]{amsart}
\usepackage{amsmath,amssymb,amsthm,amsfonts}
\usepackage{mathtools}
\usepackage{enumitem}

\usepackage[left=2.5cm,right=2.5cm,top=2.5cm,bottom=2.5cm]{geometry}
\usepackage[backref,colorlinks=true,linkcolor=blue,citecolor=blue,urlcolor=blue]{hyperref}
\usepackage{tikz}
\tikzset{
    bd/.style={circle, fill=black, minimum size=7pt, inner sep=0pt},
    sd/.style={circle, draw, minimum size=6pt, inner sep=0pt},
    inline/.style={baseline={([yshift=-0.5ex]current bounding box.center)}}
}

\numberwithin{equation}{section}
\numberwithin{figure}{section}

\newtheorem{theorem}{Theorem}[section]
\newtheorem{lemma}[theorem]{Lemma}
\newtheorem{proposition}[theorem]{Proposition}

\newtheorem{problem}[theorem]{Problem}
\theoremstyle{definition}

\newtheorem{remark}[theorem]{Remark}

\DeclareMathOperator{\tr}{tr}
\DeclareMathOperator{\dist}{dist}
\newcommand{\C}{\mathbb{C}}
\newcommand{\Z}{\mathbb{Z}}
\newcommand{\cP}{\mathcal{P}}

\begin{document}

\title[Degree-similar Unicyclic Graphs]{Degree-similar Unicyclic Graphs are Isomorphic}

\author[Y.-L. Zhang]{Yi-Liu Zhang}
\address{School of Mathematical Sciences, Anhui University, Hefei 230601, P. R. China}
\email{zhangyl@stu.ahu.edu.cn}

\author[Y.-Z. Fan]{Yi-Zheng Fan*}
\address{Center for Pure Mathematics, School of Mathematical Sciences, Anhui University, Hefei 230601, P. R. China}
\email{fanyz@ahu.edu.cn}
\thanks{*Corresponding author. Supported by National Natural Science Foundation of China (Grant No. 12331012).}

\subjclass[2020]{05C50}
\keywords{Degree-similar graph; unicyclic graph; graph isomorphism; adjacency matrix; degree matrix}

\begin{abstract}
Two graphs are degree-similar if their adjacency matrices and degree matrices are simultaneously similar.
Godsil and Sun asked whether non-isomorphic degree-similar unicyclic graphs exist.
We prove that they do not exist. Every graph degree-similar to a unicyclic graph is isomorphic to it.
The proof uses a symbolic leaf-peeling algorithm to recover the rooted trees attached to the unique cycle, and a rigidity lemma for colored cycles to identify automorphisms of the cycle.
\end{abstract}

\maketitle

\section{Introduction}

Throughout this paper, all graphs are finite and simple, and all matrices are over the complex field $\C$.
For a graph $G$, let $A(G)$ and $D(G)$ denote its adjacency matrix and degree matrix, respectively.
Following Godsil and Sun \cite{GodsilSun2025}, two graphs $G_1$ and $G_2$ are called \emph{degree-similar} if there exists an invertible matrix $M$ such that
\begin{equation}\label{eq:degree-similar-intro}
    A(G_1)M=MA(G_2), \quad D(G_1)M=MD(G_2).
\end{equation}
Thus, degree-similarity is substantially stronger than ordinary adjacency cospectrality: it implies similarity of the adjacency matrix, the
Laplacian, the signless Laplacian, and, in the absence of isolated vertices, the normalized Laplacian.

Degree-similarity belongs to a broad line of research on graphs that cannot be distinguished by spectra.
Classical constructions, such as Godsil-McKay switching \cite{GodsilMcKay1982}, produce non-isomorphic graphs with the same adjacency spectrum, while the generalized spectral problem also incorporates the spectrum of the complement; see the survey of van Dam and
Haemers \cite{VanDamHaemers2003}.
For more results on cospectral graphs and spectral determination, one can refer to 
\cite{HaeSpe04,JohnNew80,vanDamHaeKoo07,Wang13,Wang17,WangXuEJC06,WangXuLAA06}.
Another refinement is obtained from the generalized $\mu$-adjacency matrix
\[
    L_\mu(G):=A(G)-\mu D(G)
\]
and its $\mu$-polynomial
\[
    \psi(G,t,\mu):=\det(tI-L_\mu(G)),
\]
introduced and studied by Wang, Li, Lu, and Xu \cite{WangLiLuXu2011}.
The equality of $\mu$-polynomials simultaneously controls several familiar graph spectra and also determines the degree sequence.
Rooted coalescence provides another useful source of graphs with identical generalized polynomials \cite{ButlerEtAl2022}.
Degree-similarity places these phenomena in a more rigid matrix-theoretic setting, since \eqref{eq:degree-similar-intro} makes $L_\mu(G_1)$ and $L_\mu(G_2)$ similar through a matrix independent of $\mu$.

Godsil and Sun \cite{GodsilSun2025} initiated the systematic study of degree-similarity.
They developed several constructions of non-isomorphic degree-similar graphs and showed that equality of the $\mu$-polynomial alone does not imply degree-similarity, answering negatively an earlier question of Wang et al. \cite{WangLiLuXu2011}.
They concluded their paper with three problems that have guided the subsequent development of the subject.
We next summarize the progress on these problems.

Problem 10.1 of \cite{GodsilSun2025} asks for further constructions and, in particular, whether there exist non-isomorphic degree-similar unicyclic graphs, where a graph is called \emph{unicyclic} if it is connected and contains exactly one cycle.
Trees form the natural starting point: Godsil and Sun proved, using McKay's reconstruction theorem for trees \cite{McKaySpectralTrees}, that two degree-similar trees are necessarily isomorphic.
Hence, unicyclic graphs are the first class beyond trees in which one might expect non-isomorphic examples.
Fan, Xing, Zhang, and Wang \cite{FanXingZhangWang2026} characterized degree-similarity in terms of degree partitions, obtained new constructions, and proved that several families of unicyclic graphs are determined by degree-similarity.
They also found no non-isomorphic degree-similar unicyclic graph pairs with at most $20$ vertices.

Problem 10.2 of \cite{GodsilSun2025} concerns a finer algebraic invariant.
Degree-similar graphs have the same Smith normal form of $tI-L_\mu(G)$ over $\mathbb{Q}(\mu)[t]$, and Godsil and Sun asked whether the converse holds.
Fan, Xing, Zhang, and Wang \cite{FanXingZhangWang2026} answered this question negatively by constructing an infinite family of pairs of non-isomorphic trees with the same Smith normal form.
This phenomenon was subsequently placed in a general framework by Fan, Zhang, and Wang \cite{FanZhangWang2026SNF}: coalescing an arbitrary rooted graph at two vertices that are cospectral for $L_\mu$ preserves the complete Smith normal form.
This result connects degree-similarity to the classical use of cospectral vertices in constructing cospectral graphs.

Problem 10.3 of \cite{GodsilSun2025} asks whether deleting any two edges from a strongly regular graph always produces degree-similar graphs.
Godsil, Sun, and Zhang first proved that the resulting graphs are cospectral with respect to the adjacency, Laplacian, signless Laplacian, and normalized Laplacian matrices \cite{GodsilSunZhang2025}.
Fan, Xing, Zhang, and Wang then showed that they have the same $\mu$-polynomial \cite{FanXingZhangWang2026}.
Most recently, Fan, Wang, and Zhang \cite{FanWangZhang2026SRG} proved the stronger conclusion that edge-deleted strongly regular graphs are orthogonally degree-similar; their theorem in fact holds for all $1$-walk-regular graphs.
Thus, Problem 10.2 has a negative answer and Problem 10.3 has a positive answer, while the unicyclic graph part of Problem 10.1 of \cite{GodsilSun2025} remained open.

The purpose of this paper is to settle that remaining question.

\begin{problem}[Godsil-Sun \cite{GodsilSun2025}, Problem 10.1]
    Find more degree-similar graphs. In particular, are there non-isomorphic degree-similar unicyclic graphs?
\end{problem}

Our main result gives a complete negative answer to the second part of this problem.

\begin{theorem}\label{thm:main}
Let $G_1$ and $G_2$ be degree-similar graphs.
If $G_1$ is a unicyclic graph, then $G_1$ is isomorphic to $G_2$.
\end{theorem}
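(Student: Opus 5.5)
We plan to recover $G_1$ from invariants preserved by degree-similarity, in such a way that $G_2$ is forced to have the same structure. The first step is to transfer basic invariants. Degree-similarity gives simultaneous similarity of $A$ and $D$, so $L_\mu(G_1)$ and $L_\mu(G_2)$ are similar for every $\mu$. Hence $G_1$ and $G_2$ have the same number of vertices and the same degree sequence. They also have the same number of edges, since $\tr D$ agrees, and the same Laplacian spectrum. The Laplacian spectrum gives the same number of connected components, as the multiplicity of $0$. So $G_2$ is connected with $n$ edges on $n$ vertices, i.e.\ $G_2$ is unicyclic. The cycle length $g$ is also recovered: it is the smallest $k\ge 3$ with $\tr A^k\neq \tr A^k(\text{tree part})$; concretely, closed-walk counts $\tr A^k$ agree and the first odd or nonbacktracking contribution detects $g$.

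The main tool is the more refined invariant: the joint similarity gives equality of all mixed traces $\tr(W)$ for words $W$ in $A$ and $D$, and in fact of $\tr(W\,D^{j}\,W')$ patterns. Equivalently, for each $d$ the projector $E_d$ onto vertices of degree $d$ is a polynomial in $D$, so the words in $A$ and the $E_d$ have equal traces. These count closed walks whose visited vertices have prescribed degrees. We then implement a symbolic leaf-peeling. Each vertex is assigned a ``type'' given by the isomorphism class of the rooted tree hanging below it. Leaves are the degree-$1$ vertices, detected by $E_1$. We show inductively that the matrices $E_\tau$, projecting onto vertices of a given peeled type $\tau$, lie in the algebra generated by $A$ and $D$. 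The induction step expresses, for a vertex $v$, the multiset of types of its neighbours via the diagonal of $E_{\tau}AE_{\sigma}A E_\tau$-style products. Peeling then finishes after finitely many rounds and leaves exactly the cycle vertices unpeeled. Each cycle vertex receives as its colour the rooted tree attached to it. Because $M$ intertwines the whole algebra, $G_1$ and $G_2$ have the same multiset of colours, and the same counts of closed walks on the cycle reading colours in order.

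The final step, which we expect to be the main obstacle, is the rigidity lemma for coloured cycles. We must show that the traces $\tr(E_{c_1}A E_{c_2}A\cdots E_{c_g}A)$ restricted to the cycle, which count cyclic colour words traversed around the cycle, determine the cyclic colour sequence up to rotation and reflection. For walks of length exactly $g$ that wrap once around the cycle, the only contributions are genuine traversals, so these traces equal $2g$ times the indicator that the word is the colour sequence up to dihedral symmetry. The subtlety is separating wrapping walks from backtracking closed walks of the same length, which also contribute. We would handle this by an inclusion--exclusion over shorter lengths, using nonbacktracking walk matrices $A_k$ built recursively from $A$ and $D$ (namely $A_{k+1}=AA_k-(D-I)A_{k-1}$), which lie in the algebra. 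Then $\tr(E_{c_1}A_1E_{c_2}\cdots)$ counts only nonbacktracking closed walks, and these on a unicyclic graph are exactly cycle traversals. Equal traces in $G_1$ and $G_2$ then force the coloured cycles to be dihedrally equivalent. Gluing the identical rooted trees at corresponding cycle vertices then yields an isomorphism $G_1\cong G_2$.
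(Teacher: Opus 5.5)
Your overall architecture matches the paper's: transfer unicyclicity, run a symbolic leaf-peeling in the algebra generated by $A$ and $D$, prove rigidity of the coloured cycle, and glue the rooted trees. However, the rigidity step has a genuine gap when the cycle length $g$ is even.

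The trace $\tr(E_{c_1}AE_{c_2}A\cdots E_{c_g}A)$ counts all closed walks $x_0,\dots,x_g=x_0$ on $C_g$ with $x_{i-1}$ of colour $c_i$. For odd $g$ this is fine. The $g$ steps are $\pm1$, so their sum is odd and divisible by $g$, hence equals $\pm g$, and every such walk is a traversal. (The count is the number of start/direction pairs reading the word, not $2g$ times an indicator, but positivity is all you use.)

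For even $g$, closed walks with winding number zero also contribute, and your remedy does not remove them. With $A_1=A$, the expression $\tr(E_{c_1}A_1E_{c_2}\cdots)$ is literally the same trace. More generally, the non-backtracking matrices $A_k$ forbid backtracking only inside a single factor, never across an inserted projection $E_c$. A colour-refined non-backtracking recursion would need the diagonal matrix with entries $|N(v)\cap U_c|$, i.e.\ the diagonal part of $AE_cA$. But $AE_cA$ is not diagonal on the cycle, since vertices at distance two share a neighbour. So nothing in your argument places this matrix in the algebra or makes it intertwined by $M$. The ``inclusion--exclusion over shorter lengths'' is not substantiated either. The back-and-forth walks of length $g$ sweep arcs of length $g/2$, and their coloured counts are not evidently visible in shorter traces; for $g=4$, all coloured traces of length $3$ vanish.

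The missing idea, as in the paper, is to use other elements of the algebra instead of non-backtracking counts. Once the cycle block $M_C$ is split off (so $A(C)M_C=M_CA(C)$), every distance matrix $B_d$ of $C$ is a polynomial in $A(C)=S+S^{-1}$, where $S$ is the cyclic shift, since $B_d$ is $S^d+S^{-d}$ (halved when $d=g/2$). Hence the traces of $\chi(U_{\omega_0})B_{d_1}\chi(U_{\omega_1})B_{d_2}\cdots B_{d_N}$ coincide for the two colourings. For $g=2m$, choose a closed distance word that interleaves an odd number of antipodal steps with $m$ unit steps. The antipodal steps are forced because antipodes are unique, and the closing condition gives $\sum_j\varepsilon_j\equiv m\pmod{2m}$. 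This forces all unit steps to have the same sign, and the word can be chosen to visit every vertex, which yields the dihedral equivalence you need.

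A smaller point in your peeling sketch: ``taking the diagonal'' of $E_\tau AE_\sigma AE_\tau$ is not an algebra operation, so the product must itself be diagonal. This is why one peels only vertices of degree one in a current working graph and updates the working adjacency and degree matrices ($a\leftarrow(1-\eta)a(1-\eta)$, $\lambda\leftarrow\lambda-\eta-\xi$). The same symbolic choices are then legal for both graphs.
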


The proof is constructive.
We first express the leaf-peeling process through non-commutative polynomials in $A$ and $D$, motivated by McKay's reconstruction of trees \cite{McKaySpectralTrees}.
Simultaneous similarity then ensures that exactly the same symbolic reconstruction can be performed on two degree-similar unicyclic graphs.
The process removes the trees attached to the unique cycle while recording the rooted-tree type at each vertex of the cycle.
The remaining problem is a rigidity statement for vertex-colored cycles: the simultaneous similarity relation forces the two colorings of the cycle to differ only by a rotation or a reflection.
Matching the recorded rooted trees then yields an isomorphism of the original graphs.

The paper is organized as follows.
In Section~\ref{sec:peeling}, we give the symbolic leaf-peeling reconstruction algorithm and prove its correctness.
In Section~\ref{sec:degree-similarity}, we first use degree-similarity to show that the same symbolic reconstruction can be used for both unicyclic graphs, and reduce the problem to colored cycles; secondly, we
prove a rigidity lemma for colored cycles; finally, we complete the proof of Theorem~\ref{thm:main} by the reduction and the rigidity lemma.

\section{Leaf-peeling reconstruction for unicyclic graphs}\label{sec:peeling}
For a graph $G$ and a vertex $v$ of $G$, we write $V(G)$ for the vertex set of $G$, and $N_G(v)$ for the neighborhood of $v$.
The \emph{degree} of $v$, denoted by $\deg_G(v)$, is exactly the cardinality of $N_G(v)$.
For $U\subseteq V(G)$, let $\chi(U)$ be the diagonal characteristic matrix of $U$, that is,
 the $(v,v)$-diagonal entry of $\chi(U)$ is $1$ if $v\in U$ and is $0$ otherwise.
We use $O$ to denote a zero square matrix whose size can be known from the context.

Let $\cP=\C\langle x,y\rangle$ be the algebra of non-commutative polynomials in two variables.  If $p\in\cP$ and $G$ is a graph, write
\[
p_G=p(A(G),D(G)).
\]
We use the following symbolic version of the usual leaf-peeling procedure.
The advantage of this formulation is that every matrix that occurs in the algorithm is obtained by evaluating the same non-commutative polynomial in $A(G)$ and $D(G)$.

Let $G=(V,E)$ be a unicyclic graph with $n$ vertices and a unique cycle $C$.
The vertices outside $C$ form rooted trees attached to the vertices of $C$.
More precisely, there are rooted trees $T_1,\dots,T_t$ and a partition $\{U_i\}_{i=1}^t$ of $V(C)$ such that $G$ is obtained from $C$ by identifying, for each $i$ and each $v\in U_i$, the root of one copy of $T_i$ with $v$.
Note that some rooted tree may be trivial, that is, the tree with only one vertex.

For $0\le k\le n$, put
\[
\varepsilon_k(z)=\prod_{0\le j\le n, j\ne k}\frac{z-j}{k-j}.
\]
If $X$ is a diagonal matrix whose diagonal entries lie in $\{0,1,\dots,n\}$, then $\varepsilon_k(X)$ is the characteristic matrix of the set of diagonal positions at which $X$ has entry $k$.

Given rooted trees $R$ and $S$ and an integer $k\ge0$, let $R\Join_k S$ denote the rooted tree obtained from $R$ by adding, for each of the $k$ disjoint copies of $S$, one edge from the root of $R$ to the root of that copy of $S$, and by preserving the root of $R$ as the root of the new tree.
Thus $R\Join_0 S=R$.

\begin{lemma}\label{lem:peeling-matrix}
Let $G$ be a graph with adjacency matrix $A$ and degree matrix $D$.
Let $L$ be an independent set of degree-one vertices of $G$, and put $H=\chi(L)$ and $K=AHA$.  Then $K$ is diagonal and
\[
K_{vv}=|N_G(v)\cap L|,  \text{~for~each~vertex~} v\in V(G).
\]
Moreover, $(I-H)A(I-H)$ is the adjacency matrix of the graph obtained from $G$ by deleting all edges incident with the vertices in $L$, and $D-H-K$ is its degree matrix.
\end{lemma}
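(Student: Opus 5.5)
The plan is a direct entrywise computation with the definitions of $A$, $D$ and $H=\chi(L)$. For the first claim we expand
\[
K_{uv}=(AHA)_{uv}=\sum_{w\in V(G)} A_{uw}H_{ww}A_{wv}=\bigl|\{w\in L: w\in N_G(u)\cap N_G(v)\}\bigr|.
\]
Each $w\in L$ has degree one, so it has a unique neighbour. If $w$ is a common neighbour of $u$ and $v$, then $u=v$. Hence $K_{uv}=0$ for $u\neq v$, which shows that $K$ is diagonal. Putting $u=v$ gives $K_{vv}=|N_G(v)\cap L|$. This step only uses the degree-one hypothesis.

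For the adjacency claim we compute $((I-H)A(I-H))_{uv}=(1-H_{uu})A_{uv}(1-H_{vv})$. This equals $A_{uv}$ when neither $u$ nor $v$ lies in $L$, and $0$ otherwise. So the matrix is exactly the adjacency matrix of $G'$, the graph on $V(G)$ whose edge set consists of the edges of $G$ with no endpoint in $L$. That is precisely $G$ with all edges incident with $L$ deleted; the vertices of $L$ remain as isolated vertices.

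For the degree matrix we check $\deg_{G'}(v)$ case by case. If $v\in L$, its single edge is removed, so $\deg_{G'}(v)=0=\deg_G(v)-1$. Also $K_{vv}=|N_G(v)\cap L|=0$, because $L$ is independent, so $v$ has no neighbour in $L$. Hence $(D-H-K)_{vv}=1-1-0=0$. If $v\notin L$, the removed edges at $v$ are exactly those to $N_G(v)\cap L$. Therefore $\deg_{G'}(v)=\deg_G(v)-K_{vv}=(D-H-K)_{vv}$, since $H_{vv}=0$.

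Every step is routine. The only point needing care is the role of independence of $L$: it ensures that an edge between two vertices of $L$ is not subtracted twice, so that $K_{vv}=0$ for $v\in L$. I would state this explicitly in the write-up.
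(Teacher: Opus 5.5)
Your proof is correct and takes the same route as the paper. Both read $AHA$ entrywise as counting common neighbours in $L$, observe that $(I-H)A(I-H)$ zeroes the rows and columns indexed by $L$, and count degrees vertex by vertex. You also state explicitly that independence of $L$ gives $K_{vv}=0$ for $v\in L$, so that $(D-H-K)_{vv}=0$ there; the paper's proof uses this point without stating it.
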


\begin{proof}
The $(u,v)$-entry of $AHA$ is the number of vertices of $L$ adjacent to both $u$ and $v$.
Since every vertex of $L$ has degree one, no vertex of $L$ can be adjacent to two distinct vertices.
Hence, all off-diagonal entries of $K$ are zero, while the diagonal entry $K_{vv}$ is exactly $|N_G(v)\cap L|$.

The matrix $(I-H)A(I-H)$ is obtained from $A$ by setting to zero all rows and columns indexed by $L$, so it deletes precisely the edges incident with $L$.
A vertex in $L$ loses its unique incident edge, accounting for the term $H$, and a vertex outside $L$ loses exactly $|N_G(v)\cap L|$ incident edges, accounting for the term $K$.
Thus, the new degree matrix is $D-H-K$.
\end{proof}

\subsection{The symbolic reconstruction algorithm}
Fix a unicyclic graph $G$ with $n$ vertices.
The algorithm works in $\cP$, but at each step zero and nonzero are tested after evaluating a certain polynomial of $\cP$ at $(A(G),D(G))$.
It maintains polynomials $a,\lambda\in\cP$ and a finite family $\Sigma$ of pairs $(S,\pi)$, where $S$ is a rooted tree and $\pi\in\cP$.

Initialize
\[
a=x,
\quad
\lambda=y,
\quad
\Sigma=\{(K_1,1)\},
\]
where $K_1$ is the trivial rooted tree with one vertex.

While $(\lambda(\lambda-2))_G\ne  O$, do the following.

\begin{enumerate}[label=\textup{(\arabic*)},leftmargin=2.2em]
\item Choose any pair $(S,\pi)\in\Sigma$ such that, for
\[
\eta=\pi\varepsilon_1(\lambda),
\]
one has $\eta_G\ne O$.
Put
\[
\xi=a\eta a.
\]

\item Replace $\Sigma$ by the family of all pairs
\[
(R\Join_k S,\,\sigma\varepsilon_k(\xi)),
\]
for each $(R,\sigma)\in\Sigma$ and each $k \in \{0,1,\ldots, n\}$ such that
$(\sigma\varepsilon_k(\xi))_G\ne O$.

\item Replace
\[
a\leftarrow (1-\eta)a(1-\eta),
\quad
\lambda\leftarrow \lambda-\eta-\xi.
\]
\end{enumerate}

After the while loop terminates, put $q=\lambda/2$ and replace $\Sigma$ by the family of all nonzero pairs
\[
(S,\pi q),
\]
for each $(S,\pi)\in\Sigma$ such that $(\pi q)_G\ne O$.
The algorithm returns the polynomial $q$ and this final family.

\begin{remark}\label{rem:symbolic-algorithm}
This is the same leaf-removal mechanism used in McKay's reconstruction algorithm for trees \cite{McKaySpectralTrees}, written in non-commuting variables.
For a fixed graph $G$, the evaluations $a_G$, $\lambda_G$, and $\pi_G$ will be the current adjacency matrix, the current degree matrix, and the characteristic matrices of the current classes.
The use of non-commutative polynomials will be essential in Section~\ref{sec:degree-similarity}.
Note also that no canonical ordering of $\Sigma$ is needed.
For comparing two degree-similar graphs, it is enough that the same symbolic choices are legal for both graphs.
\end{remark}

\begin{proposition}\label{prop:algorithm-correct}
Let $G$ be a unicyclic graph with a unique cycle $C$.
The symbolic reconstruction algorithm terminates.
If its output is
\[
q,
\quad
\{(T_i,\rho_i)\}_{i=1}^t,
\]
then
\[
q_G=\chi(V(C)),
\quad
(\rho_i)_G=\chi(U_i) ~ (1\le i\le t)
\]
for a partition $\{U_i\}_{i=1}^t$ of $V(C)$, and $G$ is obtained from $C$ by attaching one copy of $T_i$ at every vertex in $U_i$.
\end{proposition}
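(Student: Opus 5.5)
We argue by induction on the number of iterations of the while loop, using Lemma~\ref{lem:peeling-matrix} at every step. The inductive hypothesis describes the state before each iteration. There is a subgraph $G'$ of $G$, obtained by repeatedly deleting edges incident with current leaves; we write $W$ for its set of non-isolated vertices. The induced graph $G'[W]$ is connected, unicyclic, and contains $C$, and every vertex outside $W$ is isolated in $G'$. The polynomials satisfy $a_G=A(G')$ and $\lambda_G=D(G')$. Finally, the family $\Sigma$ evaluates to characteristic matrices $\pi_G=\chi(P_\pi)$, where the sets $P_\pi$ partition $W$ (nonempty by the pruning rule in step (2)). For every $w\in P_\pi$ with $(S,\pi)\in\Sigma$, the rooted tree of $G$ hanging at $w$ that has already been deleted, together with $w$ itself, is isomorphic to $S$. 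By "the rooted tree hanging at $w$" we mean the component of $w$ in $G$ minus the edges of $G'[W]$, rooted at $w$. Initially $G'=G$, $W=V(G)$ and $\Sigma=\{(K_1,1)\}$, so the hypothesis holds trivially.

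For one iteration, first observe that $(\lambda(\lambda-2))_G\ne O$ means some vertex of $G'$ has degree different from $0$ and $2$. Since $G'[W]$ is connected, unicyclic, and not a bare cycle, it has a leaf. That leaf lies in $W$, so it lies in some $P_\pi$, and a legal choice in step (1) exists. With such a choice, $\eta_G=\chi(L)$, where $L$ consists of the degree-one vertices of $G'$ lying in $P_\pi$. All these vertices have the same type $S$.

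We next check that $L$ is independent. Two adjacent degree-one vertices would form a $K_2$ component, which is impossible because $G'[W]$ is connected and contains a cycle. So Lemma~\ref{lem:peeling-matrix} applies to $G'$: the matrix $\xi_G$ is diagonal with entries $|N_{G'}(v)\cap L|$, and the updates of $a$ and $\lambda$ yield the adjacency and degree matrices of the new graph $G''$. The vertices of $L$ are not on $C$ and become isolated. Each remaining $v\in W\setminus L$ with $k$ neighbours in $L$ gains exactly $k$ new children, each carrying a hanging tree of type $S$. Hence its new type is $R\Join_k S$, where $R$ is its old type.

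The refined classes $\sigma\varepsilon_k(\xi)$ evaluate to $\chi(P_\sigma\cap\{v:\xi_{vv}=k\})$, since $\varepsilon_k$ acts on the diagonal entries as an indicator. These sets partition $W$. The vertices of $L$ now sit in classes of isolated vertices, because $\xi_{vv}=0$ for $v\in L$. Such classes are harmless: their vertices have $\lambda$-value $0$, so step (1) never selects them, and they are discarded at the end. The new $G''[W\setminus L]$ is again connected, unicyclic, and contains $C$, so the hypothesis is restored.

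Termination follows because each iteration strictly decreases the number of edges of $G'$, as $L\neq\emptyset$. When the loop stops, every vertex of $G'$ has degree $0$ or $2$. A connected unicyclic graph with all degrees equal to $2$ is its cycle, so $G'[W]=C$. Then $q_G=\lambda_G/2=\chi(V(C))$, and $(\pi q)_G=\chi(P_\pi\cap V(C))$. Moreover, at the end the tree hanging at $w\in V(C)$ is entirely deleted, so its type is exactly the rooted tree of $G$ attached at $w$. This gives the partition $\{U_i\}$ and the attaching description.

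The main point needing care is the invariant's bookkeeping. We must make sure the vertices in $L$ and other isolated vertices stay in classes that are never chosen again. We must also make sure that a vertex's recorded type really equals its fully hanging tree only after all of its descendants have been peeled; this is guaranteed exactly when the graph reaches the cycle.
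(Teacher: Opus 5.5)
Your argument follows the paper's proof: the same induction over iterations with the same invariants (the non-isolated part is connected, unicyclic and contains $C$; the evaluated classes are characteristic matrices of a partition; the recorded rooted tree is the already peeled-off tree at a vertex), and the same use of Lemma~\ref{lem:peeling-matrix}, the same termination count and the same final restriction by $q$. There is one false step, and it sits in the bookkeeping you flagged. You claim that after an update the vertices of $L$ ``sit in classes of isolated vertices, because $\xi_{vv}=0$ for $v\in L$.'' But they land in the class $\pi\varepsilon_0(\xi)$, which evaluates to $\chi(P_\pi\cap\{v:\xi_{vv}=0\})$. That set also contains every non-leaf vertex of $P_\pi$ with no neighbour in $L$. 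In the paper's example, after Step~1 the class of $K_1$ is $V\setminus\{3,6,12\}$: it holds the isolated vertices $7,8,9,10,13$ together with the non-isolated vertices $1,2,4,5,11$. Likewise, after Step~3 the class of $T_2$ still holds the isolated vertex $6$ together with the cycle vertex $3$. So your invariant is not restored as stated. The classes partition the old $W$, not $W\setminus L$. Moreover, for an isolated vertex such as $7$, its ``hanging tree'' in your sense (its component in $G$ minus the edges of the current working graph) is the path on $\{6,7,8\}$ rooted at the end vertex $7$, not $K_1$.

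The repair is the paper's formulation (A2)--(A3): let the evaluated classes partition all of $V(G)$, and assert the rooted-tree property only for the non-isolated members of each class. That is all the argument needs. Isolated vertices are never selected because $\eta=\pi\varepsilon_1(\lambda)$ vanishes on them whatever class they belong to; this is the correct, vertex-wise form of your remark about $\lambda$-value $0$, rather than a claim about classes never being chosen. Hence $L$ consists of non-isolated members of $P_\pi$, which do have type $S$, and the final multiplication by $q$ discards all isolated vertices. With this change, the rest of your proof goes through unchanged.
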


\begin{proof}
At each stage write
\[
A_{\mathrm{work}}=a_G,
\quad
D_{\mathrm{work}}=\lambda_G,
\quad
\Sigma_G=\{(S,\pi_G):(S,\pi)\in\Sigma\}.
\]
We prove that these evaluated data satisfy the usual leaf-peeling invariants.

Let $G_{\mathrm{work}}^{(q)}$ be the graph represented by $A_{\mathrm{work}}$ and $D_{\mathrm{work}}$ after $q$ iterations, and put
\[
W_q=\{v\in V(G):\deg_{G_{\mathrm{work}}^{(q)}}(v)>0\}.
\]
We prove by induction on $q$ the following assertions.

\begin{enumerate}[label=\textup{(A\arabic*)},leftmargin=2.6em]
\item The non-isolated part $G_{\mathrm{work}}^{(q)}[W_q]$ is a connected unicyclic graph containing the original cycle $C$.

\item If $\Sigma^{(q)}=\{(S_\mu,\pi_\mu)\}_\mu$, then $(\pi_\mu)_G=\chi(U_\mu^{(q)})$ for some sets $U_\mu^{(q)}$, and these sets form a partition of $V(G)$.

\item If $u\in W_q\cap U_\mu^{(q)}$, then the connected component containing the vertex $u$ in the graph with edge set $E(G)\setminus E(G_{\mathrm{work}}^{(q)})$ is a rooted tree isomorphic to $S_\mu$, rooted at $u$.
\end{enumerate}

For $q=0$ the assertions are immediate, since $a_G=A(G)$, $\lambda_G=D(G)$, and the only member of $\Sigma$ is $(K_1,1)$.

Assume the assertions hold at the beginning of an iteration.
If $(\lambda(\lambda-2))_G\ne O$, then the non-isolated part is not a cycle, and hence it has a degree-one vertex.
By (A2), some member of the current partition contains such a vertex, so a pair $(S,\pi)$ with $(\pi\varepsilon_1(\lambda))_G\ne O$ exists.
The chosen pair only fixes one possible legal peeling order; the following argument works for every legal choice.

Let $\eta=\pi\varepsilon_1(\lambda)$ and put $H=\eta_G$.  If $\pi_G=\chi(U_S^{(q)})$, then
\[
H=\chi(L),
\quad
L=\{v\in U_S^{(q)}:\deg_{G_{\mathrm{work}}^{(q)}}(v)=1\}.
\]
The set $L$ is independent: no vertex of $C$ has degree one, and two adjacent degree-one vertices would form a connected component consisting of a single edge, contradicting that the non-isolated part is connected and contains $C$.
Applying Lemma~\ref{lem:peeling-matrix}, with $\xi=a\eta a$, the matrix $\xi_G=A_{\mathrm{work}}HA_{\mathrm{work}}$ is diagonal and its $(v,v)$-diagonal entry is $|N_{G_{\mathrm{work}}^{(q)}}(v)\cap L|$.
The update
\[
a\leftarrow (1-\eta)a(1-\eta),
\quad
\lambda\leftarrow \lambda-\eta-\xi
\]
therefore evaluates to the adjacency matrix and degree matrix obtained by isolating the vertices in $L$.
The new non-isolated part remains connected unicyclic and still contains $C$, proving (A1).

For (A2), fix an old entry $(R,\sigma)$ with $\sigma_G=\chi(U_R^{(q)})$.
Since the diagonal entries of $\xi_G$ count the number of neighbours in $L$,
\[
(\sigma\varepsilon_k(\xi))_G
=
\chi\bigl(U_R^{(q)}\cap\{v:|N_{G_{\mathrm{work}}^{(q)}}(v)\cap L|=k\}\bigr).
\]
As $R$ ranges over the old entries and $k$ ranges over $0,\ldots,n$, the nonzero sets above form a refinement of the old partition.
Thus the evaluated new family again consists of characteristic matrices of a partition.

For (A3), let $u$ be a non-isolated vertex after the update and suppose that $u$ belongs to the new part indexed by $R\Join_k S$.  Before the update, $u$ carried the rooted tree $R$, and it has exactly $k$ neighbours in $L$.  Each of these neighbours carried the rooted tree $S$.  Hence the peeled component rooted at $u$ is obtained from $R$ by attaching $k$ copies of $S$ at the root, namely it is $R\Join_k S$.

Each iteration removes at least one vertex from the non-isolated part, so the algorithm terminates.
At termination every vertex of the non-isolated part has degree two.
Since this part is connected unicyclic and contains $C$, it is exactly $C$.
Thus $(\lambda/2)_G=\chi(V(C))$.
The final multiplication by $q=\lambda/2$ restricts the partition to the cycle vertices, and (A3) shows that the rooted tree recorded at a cycle vertex is precisely the rooted tree attached to it in $G-E(C)$.
This proves the proposition.
\end{proof}

\subsection{An example for the algorithm}
We give a small example to show how the symbolic algorithm should be read.
The point of the example is not to compute many polynomials, but to see what the two matrices $\eta$ and $\xi$ mean during the leaf-peeling process.

Let $G$ be unicyclic a graph listed in Fig. \ref{exam}, which has the vertex set $V=\{1,\dots,13\}$ and the edge set
\[
\begin{split}
E=\{&\{1,2\},\{2,3\},\{3,4\},\{4,5\},\{5,1\},\{1,6\},\{6,7\},\{6,8\},\{3,9\},\{3,10\},\\
&\{5,11\},\{11,12\},\{12,13\}\}.
\end{split}
\]

\begin{figure}[htbp]
\begin{center}
\begin{tikzpicture}[every node/.style={circle,draw,inner sep=1.4pt,minimum size=15pt,font=\small}]
\node (1) at (1,0) {1};
\node (2) at (0.3,0.95) {2};
\node (3) at (-0.8,0.6) {3};
\node (4) at (-0.8,-0.6) {4};
\node (5) at (0.3,-0.95) {5};
\node (6) at (2,0) {6};
\node (7) at (3,0) {7};
\node (8) at (2,1) {8};
\node (9) at (-1.8,0.6) {9};
\node (10) at (-0.8,1.8) {10};
\node (11) at (1.3,-0.95) {11};
\node (12) at (2.3,-0.95) {12};
\node (13) at (3.3,-0.95) {13};
\draw (1)--(2)--(3)--(4)--(5)--(1);
\draw (1)--(6)--(7); \draw (6)--(8);
\draw (3)--(9); \draw (3)--(10);
\draw (5)--(11)--(12)--(13);
\end{tikzpicture}
\end{center}
\caption{\small A unicyclic graph $G$ on $13$ vertices}\label{exam}
\end{figure}

The unique cycle $C$ has vertices $1,2,3,4,5$.
We will use $X^{(q)}$ to indicate the variable $X$ at the $q$-th step.

\textbf{Initialization.}
Put $a^{(0)}=x, \lambda^{(0)}=y, \Sigma^{(0)}=\{(K_1,1)\}$.
 It gives $a^{(0)}_G=A(G)$, $\lambda^{(0)}_G=D(G)$, $\Sigma^{(0)}_G=\{(K_1,\chi(V))\}$, and the working graph $G^{(0)}=G$.
At every step, one may imagine that each vertex carries a rooted tree already peeled off below it.
At first, every vertex carries only $K_1$ and the working graph $G^{(0)}=G$.
If the selected entry is $(S,\pi)$, then $\eta=\pi\varepsilon_1(\lambda)$ marks the leaves of the current working graph that also lie in the selected class determined by $\pi$ (recalling that $\pi_G=\chi(U)$ for some subset $U \subset V(G)$), while $\xi=a\eta a$ marks their neighbors and counts how many selected leaves are attached to each neighbor.
A vertex that previously carried $R$ and is hit $k$ times by $\xi$ now carries $R\Join_k S$.

\textbf{Step 1.}
The selected class is $(K_1,1)$.
Thus, the current leaves are $L^{(1)}=\{7,8,9,10,13\}$ and $\eta^{(1)}_G=\chi(L^{(1)})$.
Their neighbours are counted by $\xi^{(1)}_G=2\chi(\{3,6\})+\chi(\{12\})$, so vertices $3$ and $6$ receive two copies of $K_1$, while vertex $12$ receives one copy of $K_1$.
Let
$$ T_1=K_1\Join_1 K_1=\begin{tikzpicture}[inline]
    \node[bd] (A) at (0,0) {};
    \node[sd] (B) at (0.8,0) {};
    \draw (A) -- (B);
\end{tikzpicture}, \quad
T_2=K_1\Join_2 K_1=\begin{tikzpicture}[inline]
    \node[bd] (A) at (0,0) {};
    \node[sd] (B) at (0.8,0) {};
    \node[sd] (C) at (0,0.8) {};
    \draw (C) -- (A) -- (B);
\end{tikzpicture},
$$
where the dark vertices denote roots.
Then $T_1$ is carried at the vertex $12$, and $T_2$ is  carried at the vertices $3$ and $6$, and all other vertices still carry $K_1$.
The working graph $G^{(1)}$ is listed in Fig. \ref{step1}.
The family
$$ \Sigma_G^{(1)}=\{(K_1, \chi(V\setminus\{3,6,12\})), (T_1, \chi(\{12\})), (T_2, \chi(\{3,6\}))\}.$$
Equivalently, the first refinement has the three nonzero parts
$K_1$ on $V\setminus\{3,6,12\}$, $T_1$ on $\{12\}$, and $T_2$ on $\{3,6\}$.

\begin{figure}[htbp]
\begin{center}
\begin{tikzpicture}[every node/.style={circle,draw,inner sep=1.4pt,minimum size=15pt,font=\small}]
\node (1) at (1,0) {1};
\node (2) at (0.3,0.95) {2};
\node (3) at (-0.8,0.6) {3};
\node (4) at (-0.8,-0.6) {4};
\node (5) at (0.3,-0.95) {5};
\node (6) at (2,0) {6};
\node (7) at (3,0) {7};
\node (8) at (2,1) {8};
\node (9) at (-1.8,0.6) {9};
\node (10) at (-0.8,1.8) {10};
\node (11) at (1.3,-0.95) {11};
\node (12) at (2.3,-0.95) {12};
\node (13) at (3.3,-0.95) {13};
\draw (1)--(2)--(3)--(4)--(5)--(1);
\draw (1)--(6);
\draw (5)--(11)--(12);
\end{tikzpicture}
\end{center}
\caption{\small  The working graph $G^{(1)}$ after Step 1}\label{step1}
\end{figure}

\textbf{Step 2.}
Now choose the pair of $\Sigma^{(1)}$ carrying $T_1$.
In the current graph, the only leaf in this class is the vertex $12$.
Thus, $\eta^{(2)}_G=\chi(\{12\})$, and
$\xi^{(2)}_G=\chi(\{11\})$.
The vertex $11$, which previously carried $K_1$, now carries the rooted tree
$$T_3=K_1\Join_1 T_1=
\begin{tikzpicture}[inline]
    \node[bd] (A) at (0,0) {};
    \node[sd] (B) at (0.8,0) {};
    \node[sd] (C) at (1.6,0) {};
    \draw (A) -- (B) -- (C);
\end{tikzpicture}.$$
The working graph $G^{(2)}$ is listed in Fig. \ref{step2}.
The family
$$ \Sigma_G^{(2)}=\{(K_1, \chi(V\setminus\{3,6,11,12\})), (T_1, \chi(\{12\})), (T_2, \chi(\{3,6\})), (T_3, \chi(\{11\})\}.$$

\begin{figure}[htbp]
\begin{center}
\begin{tikzpicture}[every node/.style={circle,draw,inner sep=1.4pt,minimum size=15pt,font=\small}]
\node (1) at (1,0) {1};
\node (2) at (0.3,0.95) {2};
\node (3) at (-0.8,0.6) {3};
\node (4) at (-0.8,-0.6) {4};
\node (5) at (0.3,-0.95) {5};
\node (6) at (2,0) {6};
\node (7) at (3,0) {7};
\node (8) at (2,1) {8};
\node (9) at (-1.8,0.6) {9};
\node (10) at (-0.8,1.8) {10};
\node (11) at (1.3,-0.95) {11};
\node (12) at (2.3,-0.95) {12};
\node (13) at (3.3,-0.95) {13};
\draw (1)--(2)--(3)--(4)--(5)--(1);
\draw (1)--(6);
\draw (5)--(11);
\end{tikzpicture}
\end{center}
\caption{\small The working graph $G^{(2)}$ after Step 2}\label{step2}
\end{figure}

\textbf{Step 3.}
Choose the pair of $\Sigma^{(2)}$ carrying $T_2$.
This class contains vertices $3$ and $6$, but only $6$ is a leaf of the current working graph.
Hence $\eta^{(3)}_G=\chi(\{6\})$ and
$\xi^{(3)}_G=\chi(\{1\})$.
The vertex $1$ previously carried $K_1$, and now carries the rooted tree
$$T_4=K_1\Join_1T_2=
\begin{tikzpicture}[inline]
    \node[sd] (A) at (0,0) {};
    \node[sd] (B) at (0.8,0) {};
    \node[sd] (C) at (0,0.8) {};
    \node[bd] (D) at (-0.8,0) {};
    \draw (C) -- (A) -- (B); \draw (D) -- (A);
\end{tikzpicture}.$$
The working graph $G^{(3)}$ is listed in Fig. \ref{step3}.
The family
$$ \Sigma_G^{(3)}=\{(K_1, \chi(V\setminus\{1,3,6,11,12\})), (T_1, \chi(\{12\})), (T_2, \chi(\{3,6\})), (T_3, \chi(\{11\}), (T_4,\chi(\{1\}))\}.$$

\begin{figure}[htbp]
\begin{center}
\begin{tikzpicture}[every node/.style={circle,draw,inner sep=1.4pt,minimum size=15pt,font=\small}]
\node (1) at (1,0) {1};
\node (2) at (0.3,0.95) {2};
\node (3) at (-0.8,0.6) {3};
\node (4) at (-0.8,-0.6) {4};
\node (5) at (0.3,-0.95) {5};
\node (6) at (2,0) {6};
\node (7) at (3,0) {7};
\node (8) at (2,1) {8};
\node (9) at (-1.8,0.6) {9};
\node (10) at (-0.8,1.8) {10};
\node (11) at (1.3,-0.95) {11};
\node (12) at (2.3,-0.95) {12};
\node (13) at (3.3,-0.95) {13};
\draw (1)--(2)--(3)--(4)--(5)--(1);
\draw (5)--(11);
\end{tikzpicture}
\end{center}
\caption{\small The working graph $G^{(3)}$ after Step 3}\label{step3}
\end{figure}

\textbf{Step 4.}
Choose the pair of $\Sigma^{(3)}$ carrying $T_3$.
The only current leaf in this class is vertex $11$.
Thus $\eta^{(4)}_G=\chi(\{11\})$, and $\xi^{(4)}_G=\chi(\{5\})$.
The vertex $5$ previously carried $K_1$, so it now carries the rooted tree
$$T_5=K_1 \Join_1 T_3=
\begin{tikzpicture}[inline]
    \node[bd] (A) at (0,0) {};
    \node[sd] (B) at (0.8,0) {};
    \node[sd] (C) at (1.6, 0) {};
    \node[sd] (D) at (2.4,0) {};
    \draw (A) -- (B) -- (C) -- (D);
\end{tikzpicture}.$$
The working graph $G^{(4)}$ is now just the cycle with several isolated vertices; see Fig. \ref{step4}.
The family
\[ \begin{split}
\Sigma_G^{(4)}= \{& (K_1, \chi(V\setminus\{1,3,5, 6,11,12\})), (T_1, \chi(\{12\})), (T_2, \chi(\{3,6\})), (T_3, \chi(\{11\})), \\
&(T_4,\chi(\{1\})), (T_5, \chi(\{5\}))\}.
\end{split}
\]

\begin{figure}[htbp]
\begin{center}
\begin{tikzpicture}[every node/.style={circle,draw,inner sep=1.4pt,minimum size=15pt,font=\small}]
\node (1) at (1,0) {1};
\node (2) at (0.3,0.95) {2};
\node (3) at (-0.8,0.6) {3};
\node (4) at (-0.8,-0.6) {4};
\node (5) at (0.3,-0.95) {5};
\node (6) at (2,0) {6};
\node (7) at (3,0) {7};
\node (8) at (2,1) {8};
\node (9) at (-1.8,0.6) {9};
\node (10) at (-0.8,1.8) {10};
\node (11) at (1.3,-0.95) {11};
\node (12) at (2.3,-0.95) {12};
\node (13) at (3.3,-0.95) {13};
\draw (1)--(2)--(3)--(4)--(5)--(1);
\end{tikzpicture}
\end{center}
\caption{\small The working graph $G^{(4)}$ after Step 4}\label{step4}
\end{figure}

At this point every non-isolated vertex has current degree two, so the loop stops.
Put $q=\lambda^{(4)}/2$. Then
\[
q_G=\chi(\{1,2,3,4,5\})=\chi(V(C)).
\]
Multiplying the final indexed family $\Sigma_G^{(4)}$ by $q$ simply discards all isolated vertices.
The remaining family is
\[
(K_1,\chi(\{2,4\})),\quad (T_2,\chi(\{3\})),\quad (T_4,\chi(\{1\})),\quad (T_5,\chi(\{5\})).
\]
Thus the algorithm recovers exactly the visible decomposition of $G$: the cycle $C$ on vertices $1,\ldots,5$, together with $T_2$ attached at vertex $3$, $T_4$ attached at vertex $1$, $T_5$ attached at vertex $5$; and no nontrivial tree is attached at vertices $2$ and $4$.

\section{Degree similarity of unicyclic graphs}\label{sec:degree-similarity}

\begin{lemma}\label{lem:polynomial-intertwining}
Let $G_1$ and $G_2$ be degree-similar via an invertible matrix $M$, so that $A(G_1)M=MA(G_2)$ and $D(G_1)M=MD(G_2)$.
Then, for every $p \in\cP$,
\[
p(A(G_1),D(G_1))M=Mp(A(G_2),D(G_2)).
\]
In particular, $p(A(G_1),D(G_1))= O$ if and only if $p(A(G_2),D(G_2))= O$.
\end{lemma}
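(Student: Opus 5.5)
The plan is to show that the set of non-commutative polynomials satisfying the intertwining relation is a subalgebra of $\cP$ containing the generators $x$ and $y$; it must then be all of $\cP$. Write $A_i=A(G_i)$ and $D_i=D(G_i)$, and put
\[
\mathcal{S}=\{p\in\cP:\ p(A_1,D_1)M=Mp(A_2,D_2)\}.
\]
By hypothesis $x,y\in\mathcal{S}$. The constant polynomial $1$ evaluates to $I$ in both graphs, so $1\in\mathcal{S}$.

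Linearity is immediate. If $p,r\in\mathcal{S}$ and $\alpha,\beta\in\C$, then
\[
(\alpha p+\beta r)(A_1,D_1)M=\alpha p(A_1,D_1)M+\beta r(A_1,D_1)M=M(\alpha p+\beta r)(A_2,D_2).
\]
For products, evaluation is an algebra homomorphism $\cP\to \C^{n\times n}$ that respects the order of factors. Hence
\[
(pr)(A_1,D_1)M=p(A_1,D_1)\,r(A_1,D_1)M=p(A_1,D_1)\,M\,r(A_2,D_2)=M\,p(A_2,D_2)\,r(A_2,D_2),
\]
which equals $M(pr)(A_2,D_2)$. So $\mathcal{S}$ is a subalgebra of $\cP$ containing $x$ and $y$. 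Since $\cP$ is spanned by words in $x$ and $y$, we get $\mathcal{S}=\cP$. Equivalently, one can induct on word length, since each monomial is $x w$ or $y w$ for a shorter word $w$, and then extend by linearity.

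For the final claim, $M$ is invertible, so the identity rearranges to
\[
p(A_1,D_1)=M\,p(A_2,D_2)\,M^{-1}.
\]
Thus one evaluation vanishes exactly when the other does. The matrices have the same size because $M$ is a square invertible matrix intertwining them.

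No step is a real obstacle. The only point to check is that evaluation respects the order of non-commuting factors, which is exactly why $\cP$ is taken non-commutative: $A$ and $D$ need not commute.
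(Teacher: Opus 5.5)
Your proof is correct and follows the same approach as the paper: the intertwining identity holds for the generators $x$ and $y$, it is preserved under linear combinations and products, and the vanishing equivalence follows from the invertibility of $M$. Your explicit check that the constant $1$ (which evaluates to $I$) intertwines fills in a small detail that the paper leaves implicit.
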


\begin{proof}
The identity is immediate for $p=x$ and $p=y$, and is preserved under addition, scalar multiplication, and multiplication of non-commutative polynomials.
The final statement follows by multiplying by $M^{-1}$.
\end{proof}

\begin{lemma}\label{lem:degree-similar-preserves-unicyclic}
If $G_1$ and $G_2$ are degree-similar and $G_1$ is a unicyclic graph, then $G_2$ is also a unicyclic graph.
\end{lemma}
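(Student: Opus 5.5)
Degree-similarity preserves the number of vertices, the number of edges and connectivity, and together these force $G_2$ to be unicyclic. Let $n=|V(G_1)|$. Since $M$ is invertible, $A(G_1)$ and $A(G_2)$ have the same size, so $|V(G_2)|=n$. The matrices $D(G_1)$ and $D(G_2)$ are similar, hence have the same trace. Therefore $2|E(G_2)|=\tr D(G_2)=\tr D(G_1)=2|E(G_1)|=2n$, and $G_2$ has exactly $n$ edges.

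Next we show that $G_2$ is connected, using the Laplacian. Put $L_i=D(G_i)-A(G_i)$. Taking the polynomial $p=y-x$ in Lemma~\ref{lem:polynomial-intertwining} gives $L_1M=ML_2$, so $L_1$ and $L_2$ are similar. In particular their kernels have the same dimension. For any graph, the dimension of the Laplacian kernel equals its number of connected components. Since $G_1$ is connected, $\dim\ker L_1=1$, hence $\dim\ker L_2=1$ and $G_2$ is connected.

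A connected graph with $n$ vertices and $n$ edges contains exactly one cycle: a spanning tree uses $n-1$ edges, and the single remaining edge closes exactly one cycle. Hence $G_2$ is unicyclic.

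No step is a genuine obstacle. The only point to state carefully is the standard fact that $\dim\ker L(G)$ equals the number of components. This holds because $x^{\top}L(G)x=\sum_{uv\in E}(x_u-x_v)^2$, so $L(G)x=0$ exactly when $x$ is constant on each component. Alternatively, one could avoid the Laplacian and argue via the spectral radius of $A(G)+D(G)$, or via Perron–Frobenius. The kernel argument is the most direct route.
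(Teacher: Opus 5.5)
Your proof is correct and has the same structure as the paper's: show that $G_2$ is connected and has the same number of edges as $G_1$, then conclude that it is unicyclic. The paper gets these two facts by citing Lemma 17 of Fan, Xing, Zhang, and Wang; you instead prove them directly from $\tr D(G_1)=\tr D(G_2)$ and the similarity of the Laplacians $D-A$ (kernel dimension equals the number of components), which makes your argument self-contained.
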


\begin{proof}
By \cite[Lemma 17]{FanXingZhangWang2026}, $G_2$ is connected with the same number of edges as $G_1$, implying that $G_2$ is unicyclic.
\end{proof}

\begin{proposition}\label{prop:cycle-reduction}
Let $G_1$ and $G_2$ be degree-similar unicyclic graphs.
Then, after relabeling the vertices, their unique cycles can be identified with the same cycle $C$.
Moreover, there are finite families
\[
\Sigma^1=\{(T_i,\chi(U_i^1))\}_{i=1}^t,
\quad
\Sigma^2=\{(T_i,\chi(U_i^2))\}_{i=1}^t,
\]
where $\{U_i^\ell\}_{i=1}^t$ is a partition of $V(C)$ for $\ell=1,2$, such that $G_\ell$ is reconstructed by attaching $T_i$ to every vertex in $U_i^\ell$.
In addition, $|U_i^1|=|U_i^2|$ for all $i$, and there exists an invertible matrix $M_C$ indexed by $V(C)$ such that
\[
A(C)M_C=M_CA(C),
\quad
\chi(U_i^1)M_C=M_C\chi(U_i^2) ~ (1\le i\le t),
\]
where $\chi(U_i^\ell)$ denotes the characteristic matrix on the vertex set $V(C)$.
\end{proposition}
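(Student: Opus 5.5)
We run the symbolic reconstruction algorithm on $G_1$ and $G_2$ simultaneously, making the same symbolic choices for both, and then restrict the intertwining matrix $M$ to the cycle.

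\textbf{Step 1: the same run is legal for both graphs.} By Lemma~\ref{lem:degree-similar-preserves-unicyclic}, $G_2$ is unicyclic. We run the algorithm of Section~\ref{sec:peeling} on $G_1$, and at each step we make the same choice of pair in $\Sigma$ for $G_2$. Every zero test in the algorithm asks whether $p_{G}=O$ for some $p\in\cP$. By Lemma~\ref{lem:polynomial-intertwining}, $p_{G_1}=O$ exactly when $p_{G_2}=O$. By induction on the number of iterations, the following all coincide for $G_1$ and $G_2$:
\begin{itemize}
\item the loop condition $(\lambda(\lambda-2))_G\ne O$;
\item which pairs $(S,\pi)$ may be chosen;
\item which refined pairs $(R\Join_k S,\sigma\varepsilon_k(\xi))$ are retained;
\item which final pairs $(S,\pi q)$ are retained.
\end{itemize}
Hence both runs yield the same polynomial $q$ and the same symbolic family $\{(T_i,\rho_i)\}_{i=1}^t$, with the same tree labels.

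Proposition~\ref{prop:algorithm-correct}, applied to each graph, gives $q_{G_\ell}=\chi(V(C_\ell))$ and $(\rho_i)_{G_\ell}=\chi(U_i^\ell)$, where $\{U_i^\ell\}_{i=1}^t$ partitions $V(C_\ell)$. It also shows that $G_\ell$ is recovered by attaching $T_i$ at every vertex of $U_i^\ell$. Distinct indices $i$ may carry the same tree type; this is harmless, and we may merge such classes if we wish.

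\textbf{Step 2: the cycles have the same length and the classes have the same sizes.} Lemma~\ref{lem:polynomial-intertwining} gives $q_{G_1}M=Mq_{G_2}$, so the idempotents $\chi(V(C_1))$ and $\chi(V(C_2))$ are similar. Their ranks agree, so $|V(C_1)|=|V(C_2)|$. Likewise, $(\rho_i)_{G_1}$ and $(\rho_i)_{G_2}$ are similar idempotents, which gives $|U_i^1|=|U_i^2|$. We now relabel the vertices so that both cycles equal the same labelled cycle $C$ on a common vertex set. To do this, order $V(G_\ell)$ as $V(C)$ followed by the remaining vertices; relabeling only conjugates $M$ by permutation matrices, so the intertwining relations persist.

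\textbf{Step 3: compressing $M$ to the cycle.} This is the main point. Write $P=\chi(V(C))=q_{G_1}=q_{G_2}$ after relabeling. Let $M_C$ be the $V(C)\times V(C)$ block of $PMP$.
\begin{itemize}
\item \emph{Invertibility.} The relation $PM=MP$ shows that $M$ maps the range of $P$ onto itself. Since $M$ is invertible, its restriction $M_C$ is invertible.
\item \emph{The cycle adjacency commutes with $M_C$.} Let $a_{\mathrm{fin}}$ be the final polynomial $a$ from the common run. Since $a_{\mathrm{fin},G_\ell}$ is the adjacency matrix of the final working graph, which is $C$ plus isolated vertices, we have $(q\,a_{\mathrm{fin}}\,q)_{G_\ell}=A(C)\oplus O$. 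Lemma~\ref{lem:polynomial-intertwining} gives $(q a_{\mathrm{fin}} q)_{G_1}M=M(q a_{\mathrm{fin}} q)_{G_2}$. Compressing by $P$ yields $A(C)M_C=M_CA(C)$.
\item \emph{The classes are intertwined.} Each final $\rho_i$ has the form $\pi q$ with $\pi_G$ diagonal, so $(\rho_i)_{G_\ell}$ is supported inside $V(C)$. Intertwining and compressing gives $\chi(U_i^1)M_C=M_C\chi(U_i^2)$.
\end{itemize}

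The delicate point is making sure that the compression really preserves the relations. This follows because every matrix involved is of the form $PXP$ with $PM=MP$: from $X_1M=MX_2$ we get $PX_1P\,PMP=PMP\,PX_2P$.
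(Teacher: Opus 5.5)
Your proof is correct and follows the paper's argument. You run one common symbolic reconstruction that Lemma~\ref{lem:polynomial-intertwining} makes legal for both graphs, compare ranks of the intertwined idempotents $q_{G_\ell}$ and $(\rho_i)_{G_\ell}$, relabel, and use $PM=MP$ to split off the invertible block $M_C$. The only difference is cosmetic: the paper gets $A(C)M_C=M_CA(C)$ by compressing $A_1M=MA_2$ directly, since the cycle of a unicyclic graph is an induced subgraph, whereas you compress the intertwining of $q\,a_{\mathrm{fin}}\,q$; both routes are valid.
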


\begin{proof}
Let $M$ be an invertible matrix that satisfies $A_1 M=M A_2, D_1 M=M D_2$, where $A_i,D_i$ be the adjacency and degree matrices of $G_i$ for $i=1,2$.
Run any symbolic legal reconstruction of Section~\ref{sec:peeling} with $G_1$ as input.
The Lemma~\ref{lem:polynomial-intertwining} shows that every zero test made in this run has the same answer on $G_1$ and on $G_2$.
Hence, the same sequence of symbolic choices is also legal for $G_2$: the same pairs may be chosen, the same entries are omitted, and the same rooted trees and output polynomials are obtained.
Thus, the proof does not require a canonical ordering of $\Sigma$.

Let the common symbolic output be
\[
q, \quad
\{(T_i,\rho_i)\}_{i=1}^t.
\]
By Proposition~\ref{prop:algorithm-correct}, for $\ell=1,2$,
\[
q(A_\ell,D_\ell)=\chi_C^\ell,
\quad
\rho_i(A_\ell,D_\ell)=\chi(U_i^\ell) ~ (1\le i\le t),
\]
where $\chi_C^\ell$ is the characteristic matrix of the unique cycle of $G_\ell$, the sets $\{U_i^\ell\}_{i=1}^t$ partition the cycle $C$, and $G_\ell$ is reconstructed by attaching $T_i$ at every vertex in $U_i^\ell$.  Applying Lemma~\ref{lem:polynomial-intertwining} to $q$ and to each $\rho_i$ gives
\begin{equation}\label{eq:cycle-projection-intertwining}
\chi_C^1 M=M \chi_C^2,
\end{equation}
\begin{equation}\label{eq:color-projection-intertwining}
\chi(U_i^1)M=M\chi(U_i^2), ~ 1\le i\le t.
\end{equation}
By \eqref{eq:cycle-projection-intertwining}, $\chi_C^1$ and $\chi_C^2$ have the same rank, so two cycles have the same length, say $r$.
Similarly, by \eqref{eq:color-projection-intertwining}, $|U_i^1|=|U_i^2|$ for every $i$.

Relabel the vertices so that the two unique cycles have the same vertex set $V(C)=\{1,\ldots,r\}$.
  Under the corresponding permutation similarities, $M$ is replaced by the transformed similarity matrix and is still denoted by $M$, and \eqref{eq:cycle-projection-intertwining} becomes
\begin{equation}\label{M-decom}
\chi(V(C)) M= M \chi(V(C)).
\end{equation}
So, $$M=M[V(C)] \oplus M[V(G)\setminus V(C)],$$
 where $M[S]$ denotes the principal submatrix of $M$ indexed by the vertex subset $S \subset V(G)$.
Surely, $M_C:=M[V(C)]$ is invertible.

By the equality $A_1M=MA_2$ and the direct sum decomposition of $M$, we have
\[
A(C) M_C=M_C A(C).
\]
The restriction of \eqref{eq:color-projection-intertwining} to $V(C)$ gives
\[
\chi(U_i^1)M_C=M_C\chi(U_i^2)
\quad(1\le i\le t),
\]
where now $\chi(U_i^\ell)$ denotes the characteristic matrix on $V(C)$.
This proves the proposition.
\end{proof}

Next, we prove a rigidity lemma for colored cycles.
The \emph{distance} of two vertices $u,v$ in a graph $G$, denoted by $\dist_G(u,v)$, is defined to be the minimum length of the paths of $G$ connecting $u$ and $v$.

\begin{lemma}\label{lem:colored-cycle}
Let $C$ be the cycle with vertex set $V=\Z_r$ and adjacency matrix $A$.
Let $\{U_k^1\}_{k=1}^t$ and $\{U_k^2\}_{k=1}^t$ be two partitions of $V$.
Suppose that there exists an invertible matrix $M$ such that
\begin{equation}\label{simu-simi}
AM=MA,
\quad
\chi(U_k^1)M=M\chi(U_k^2)
\quad(1\le k\le t).
\end{equation}
Then there exists an automorphism $\varphi$ of $C$ such that
\[
\varphi(U_k^1)=U_k^2, ~ 1\le k\le t.
\]
\end{lemma}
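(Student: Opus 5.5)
The plan is to exploit that, on a cycle, the centralizer of $A$ is very explicit. Then I would turn the conditions $\chi(U_k^1)M=M\chi(U_k^2)$ into a support condition on $M$, and extract from that support a single dihedral symmetry.

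\textbf{Step 1: the centralizer.} Let $R$ be the rotation matrix ($R e_v=e_{v+1}$) and $S$ the reflection matrix ($S e_v=e_{-v}$), so that $A=R+R^{-1}$. The eigenvalues $2\cos(2\pi j/r)$ of $A$ have multiplicity $2$ except for $j=0$ and, when $r$ is even, $j=r/2$. The centralizer of $A$ therefore has dimension $r$ when $r\le 2$; for $r\ge 3$ it has dimension $2r-2$ if $r$ is even and $2r-1$ if $r$ is odd. This is exactly the dimension of the span of the dihedral permutation matrices $\{R^j,R^jS\}$, and these commute with $A$. Hence every $M$ with $AM=MA$ can be written
\[
M=\sum_{j\in\Z_r}\alpha_j R^j+\sum_{j\in\Z_r}\beta_j R^jS ,
\qquad
M_{u,v}=\alpha_{u-v}+\beta_{u+v}.
\]
The representation is not unique, since the span has a small kernel. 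Let $c_\ell(v)$ denote the index $k$ with $v\in U_k^\ell$. The relations $\chi(U_k^1)M=M\chi(U_k^2)$ for all $k$ say precisely that $M_{u,v}=0$ whenever $c_1(u)\neq c_2(v)$. The goal becomes: if an invertible matrix of the form $(\alpha_{u-v}+\beta_{u+v})$ vanishes off $\{(u,v):c_1(u)=c_2(v)\}$, then some rotation or reflection $\varphi$ satisfies $c_2\circ\varphi=c_1$.

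\textbf{Step 2: enlarging the intertwined algebra.} By the argument of Lemma~\ref{lem:polynomial-intertwining}, applied to the matrices $A,\chi(U_1^\ell),\dots,\chi(U_t^\ell)$, every non-commutative polynomial $p$ in $A$ and the projections satisfies $p^{(1)}M=Mp^{(2)}$. Consequently $\tr p^{(1)}=\tr p^{(2)}$. I would use words such as $\chi(U_k)A\chi(U_l)$, which are the adjacency matrices of colour-$k$-to-colour-$l$ edges, to refine both colourings simultaneously in a Weisfeiler--Leman fashion. After the refinement stabilizes, the relations of the statement still hold for the refined partitions, with the same $M$. Trace equalities of longer words then show that the two colourings have the same multiset of coloured paths of each length $m<r$, that is, the same multiset of length-$m$ coloured subwords of the cyclic colour sequence read in either direction.

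\textbf{Step 3: the main obstacle.} The hard step is passing from this coloured walk data, or equivalently from the support condition on $M$, to an actual dihedral map. Invertibility of $M$ gives, by expanding the determinant, a colour-preserving bijection $\sigma$ with $M_{u,\sigma(u)}\ne 0$ for all $u$. However, $\sigma$ need not be dihedral. In addition, each entry $M_{u,v}$ is a sum of two coefficients, one from the rotation with $u-v$ fixed and one from the reflection with $u+v$ fixed, and these can cancel.

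My first attempt is extremal. Among all $j$ with $\alpha_j\neq0$ or $\beta_j\neq0$, consider the sum $\sum_{u}\alpha_j\,[c_1(u)\ne c_2(u-j)]$ of constraints a single dihedral element violates. Each violated pair $(u,u-j)$ must be cancelled by the unique reflection term through that pair, namely the one indexed by $2u-j$. I would show that these cancellations propagate around the cycle and force $M$ to be singular unless some single element, say $R^j$ or $R^jS$, violates no constraint; that element is the required $\varphi$.

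If the cancellation bookkeeping becomes unmanageable, the fallback is to work in the Fourier basis. There $M$ becomes a block-diagonal matrix with $2\times2$ blocks on each eigenspace pair $\{j,-j\}$, and the support condition becomes a system of linear relations between the discrete Fourier transforms of the indicator functions of the $U_k^\ell$. I would then use nonvanishing of a suitable block determinant to produce one dihedral symmetry carrying each $U_k^1$ onto $U_k^2$.
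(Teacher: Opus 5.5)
Your Step~1 is correct: for $r\ge3$ the centralizer of $A$ is the span of the dihedral permutation matrices, so $M_{u,v}=\alpha_{u-v}+\beta_{u+v}$. The hypotheses say exactly that $M_{u,v}=0$ whenever $c_1(u)\ne c_2(v)$, and a nonvanishing term of $\det M$ gives a colour-preserving bijection. But this only restates the lemma. Its whole content lies in Step~3, and there you give no argument. The claim that cancellations ``propagate around the cycle and force $M$ to be singular unless some single rotation or reflection violates no constraint'' is precisely what has to be proved, and it is unclear how to run it:
\begin{itemize}
\item $(\alpha,\beta)$ is only determined modulo the relations $\sum_jR^j=\sum_jR^jS$ and, for even $r$, $\sum_j(-1)^jR^j=\sum_j(-1)^jR^jS$;
\item several rotation and reflection terms can cancel at different entries simultaneously;
\item the quantity you propose to extremize is weighted by complex coefficients, so it is not well posed as written.
\end{itemize}
The Fourier fallback names no specific determinant and no mechanism that produces $\varphi$.

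Step~2 does not close the gap either. Traces of words in $A$ and the $\chi(U_k^\ell)$ count closed coloured \emph{walks}, including backtracking ones, not coloured paths. A Weisfeiler--Leman refinement step needs diagonal parts of matrices such as $A\chi(U_l^\ell)A$, and these are not in general non-commutative polynomials in $A$ and the projections. (The paper gets a diagonal $AHA$ only because $H$ marks leaves.) So you have not justified that the same $M$ intertwines the refined partitions. In fact, if you really had equal multisets of coloured paths with $r-1$ edges, the lemma would be immediate, since such a path is Hamiltonian and its colour word is the whole cyclic colouring. That unproved claim therefore already carries the entire difficulty.

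The missing idea, which the paper uses, is to make a single trace invariant rigid.
\begin{itemize}
\item The distance matrices $B_d$ of $C$ are polynomials in $A$: $R^d+R^{-d}=f_d(A)$ with $f_0=2$, $f_1=x$, $f_{d+1}=xf_d-f_{d-1}$.
\item Hence $\tr\bigl(\chi(U^\ell_{\omega_0})B_{d_1}\chi(U^\ell_{\omega_1})B_{d_2}\cdots\chi(U^\ell_{\omega_{N-1}})B_{d_N}\bigr)$ is the same for $\ell=1,2$. It counts closed sequences with prescribed colours and prescribed successive distances.
\item Fix a closed distance pattern that visits every vertex and whose only realizations are rotations and reflections of itself. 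For odd $r$, take $r$ unit steps: the $r$ signs have an odd sum divisible by $r$, so they are all equal. For $r=2m$, alternate antipodal and unit steps with an odd number of antipodal steps: the $m$ unit signs must then sum to $\pm m$. Unit steps alone fail for even $r$ precisely because closed unit-step walks can backtrack.
\item Colouring this sequence by $C^{(1)}$ gives a positive trace, hence a realization in $C^{(2)}$. That realization must be $\varphi(v_0),\dots,\varphi(v_N)$ for some rotation or reflection $\varphi$.
\item Since every vertex is visited, $\varphi(U_k^1)\subseteq U_k^2$ for all $k$. Equality follows because both families are partitions.
\end{itemize}
This avoids any analysis of the support or the coefficients of $M$.
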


\begin{remark}
For a graph, a partition of its vertex set is equivalent to a corloring of its vertices.
In Lemma \ref{lem:colored-cycle}, the partition $\{U_k^1\}_{k=1}^t$ gives a colored cycle denoted by $C^{(1)}$, that is, a vertex $v$ receives color $k$ if $v \in U_k^1$.
Similarly, the partition $\{U_k^2\}_{k=1}^t$ also gives a colored cycle $C^{(2)}$.
The lemma \ref{lem:colored-cycle} says that if the two colored cycles $C^{(1)}$ and $C^{(2)}$ satisfy the simultaneous similarity relation \eqref{simu-simi}, then the two colorings (i.e. two partitions) differ only by a rotation or a reflection.
\end{remark}

\begin{proof}
For $0\le d\le\lfloor r/2\rfloor$, let $B_d$ be the distance-$d$ matrix of $C$, that is,
$(B_d)_{uv}=1$ if and only if $\dist_C(u,v)=d$.
We first prove that every $B_d$ is a polynomial in $A$.
Indeed, let $S$ be the cyclic shift matrix indexed by $\Z_r$, that is, $S_{i,j}=1$ if and only $j=i+1 \mod r$.
Then $A=S+S^{-1}$.
If the polynomials $f_d$ are defined by $f_0(x)=2, f_1(x)=x$ and
$$ f_{d+1}(x)=xf_d(x)-f_{d-1}(x), $$
then $f_d(A)=S^d+S^{-d}$ by induction.
Thus, $B_0=I$, $B_d=S^d+S^{-d}=f_d(A)$ for $1\le d<r/2$, and $B_{r/2}=\frac{1}{2}f_{r/2}(A)$ if $r$ is even.
Therefore, $B_d M=M B_d$ for all $d$ as $A M = M A$.

Let $\omega=(\omega_0,\dots,\omega_{N-1})$ be a word over $\{1,\dots,t\}$, and let $\mathbf d=(d_1,\dots,d_N)$ be with $0\le d_i\le\lfloor r/2\rfloor$.
Define
\[
T_\ell(\omega,\mathbf d)
=
\tr\bigl(
\chi(U_{\omega_0}^\ell) B_{d_1}
\chi(U_{\omega_1}^\ell) B_{d_2}
\cdots
\chi(U_{\omega_{N-1}}^\ell) B_{d_N}
\bigr).
\]
Using $\chi(U_k^1)=M \chi(U_k^2) M^{-1}$ and $B_d=M B_d M^{-1}$, we obtain
\[
T_1(\omega,\mathbf d)=T_2(\omega,\mathbf d).
\]
Combinatorially, $T_\ell(\omega,\mathbf d)$ is the number of closed sequences $x_0,x_1,\dots,x_N=x_0$ in $C$ such that
\[
x_i\in U_{\omega_i}^\ell ~ (0\le i \le N-1),
\quad
\dist_C(x_{i-1},x_i)=d_i ~ (1\le i\le N).
\]

We now choose a closed distance word that is rigid up to an automorphism of the cycle.

First, suppose that $r$ is odd.
Set $N=r$ and $v_i=i\pmod r$ for $0\le i\le r$.
Then $v_r=v_0$, every successive distance is $1$, and the sequence visits every vertex.
If $x_0,x_1,\dots,x_r=x_0$ is any closed sequence with all successive distances equal to $1$, namely,
$$\dist_C(x_{i-1},x_i)=\dist_C(v_{i-1},v_i)=1, ~ 1 \le i \le N,$$
then $x_i-x_{i-1}=\varepsilon_i$ for some $\varepsilon_i\in\{\pm1\}$.
The closing condition gives $\sum_{i=1}^r\varepsilon_i\equiv0\pmod r$.
The integer $\sum_i\varepsilon_i$ lies between $-r$ and $r$ and has odd parity, so it is either $r$ or $-r$.
Thus, all signs are equal, and the sequence is obtained from $v_0,\dots,v_r$ by a rotation or a reflection of $C$.

Now suppose that $r=2m$ is even.
Since $r\ge4$, we have $m\ge2$.
Define a closed sequence $v_0,\dots,v_N=v_0$ by setting $v_0=0$ and using the following increments in $\Z_{2m}$:
\[
(s_1,\dots,s_N)=
\begin{cases}
(m,1,m,1,\dots,m,1), & m \text{~odd},\\
(m,1,m,1,\dots,m,1,m), & m \text{~even},
\end{cases}
\]
where in both lines the pair $(m,1)$ is repeated $m$ times.
Put $v_i=v_{i-1}+s_i$.
The number of increments equal to $m$ is odd, and the number of increments equal to $1$ is $m$; hence, the total displacement is congruent to $m+m=0 \mod {2m}$, so the sequence is closed.

We claim that the sequence visits every vertex of $C$.
If $m$ is odd, then
\[
v_{2a}=a(m+1),
~
v_{2a+1}=a(m+1)+m, ~ 0\le a \le m-1.
\]
Here $\gcd(m+1,2m)=2$, so the vertices $v_{2a}$ run through the even parity class of $V(C)$ and the vertices $v_{2a+1}$ run through the odd parity class of $V(C)$.
If $m$ is even, then $\gcd(m+1,2m)=1$.
The vertices $a(m+1)$ with $0\le a\le m$ are distinct, and the vertices $a(m+1)+m$ with $0\le a<m$ are distinct.
If $a(m+1)\equiv b(m+1)+m\pmod {2m}$ with $0\le a\le m$ and $0\le b<m$, then, since $m+1$ is invertible modulo $2m$ and $m(m+1)\equiv m\pmod {2m}$, we get $a-b\equiv m\pmod {2m}$.
The given ranges force $a=m$ and $b=0$.
Thus, the two lists intersect only at the common vertex $m$, and their union has $2m$ vertices.

Let $x_0,\dots,x_N=x_0$ be any closed sequence that
\[
\dist_C(x_{i-1},x_i)=\dist_C(v_{i-1},v_i), ~ 1\le i\le N.
\]
The steps of distance $m$ are forced, because each vertex of $C_{2m}$ has a unique antipodal vertex.
Each step of distance $1$ has the sign $+1$ or $-1$; write these signs as $\varepsilon_1,\ldots,\varepsilon_m$.
Since the number of antipodal steps is odd, their total contribution is congruent with $m$ modulo $2m$.
The closing sequence condition, therefore, gives
\[
\sum_{j=1}^m\varepsilon_j\equiv m \mod {2m}.
\]
The integer $\sum_j\varepsilon_j$ lies between $-m$ and $m$, so it is either $m$ or $-m$.
Hence, all unit steps have the same sign.
Therefore, $x_0,\dots,x_N$ is obtained from $v_0,\dots,v_N$ by a rotation or a reflection of $C$.

In both parity cases of $r$, we have constructed a closed sequence $S: v_0,\dots,v_N=v_0$ of $C$ that visits every vertex of $C$ and has the following rigidity property: every closed sequence of $C$ with the same successive distance as $S$ is obtained from $S$ by an automorphism (rotation or reflection) of $C$.

Now, define a particular word $\omega=(\omega_0,\dots,\omega_{N-1})$ by $\omega_i=k$ if $v_i\in U_k^1$,
and $\mathbf d=(d_1,\ldots, d_N)$ by $d_i=\dist_C(v_{i-1},v_i)$ for $1\le i\le N$.
The sequence $v_0,\ldots,v_N$ is counted by $T_1(\omega,\mathbf d)$, so $T_1(\omega,\mathbf d)>0$.
Since $T_1(\omega,\mathbf d)=T_2(\omega,\mathbf d)$, there exists a sequence $x_0,\ldots,x_N=x_0$ counted by $T_2(\omega,\mathbf d)$.
By the rigidity property, there is an automorphism $\varphi$ of $C$ such that $x_i=\varphi(v_i)$ for all $i$.

For every $i<N$, the condition that $x_i$ is counted by $T_2$ gives $x_i\in U_{\omega_i}^2$, while $v_i\in U_{\omega_i}^1$ by definition of $\omega_i$.  Since the sequence $v_0,\dots,v_N$ visits every vertex of $C$, we get
\[
\varphi(U_k^1)\subseteq U_k^2, ~ 1\le k\le t.
\]
The sets $\{\varphi(U_k^1)\}_{k=1}^t$ and $\{U_k^2\}_{k=1}^t$ are both partitions of $V$, so all these inclusions are equalities.  This proves the lemma.
\end{proof}

Finally, we arrive at the proof of the main result, namely, Theorem \ref{thm:main}.

\begin{proof}[Proof of Theorem \ref{thm:main}]
By Proposition~\ref{prop:cycle-reduction}, after relabelling the unique cycles of $G_1$ and $G_2$ as the same cycle $C$, there are finite families
\[
\Sigma^1=\{(T_i,\chi(U_i^1))\}_{i=1}^t,
\quad
\Sigma^2=\{(T_i,\chi(U_i^2))\}_{i=1}^t
\]
that reconstruct $G_1$ and $G_2$, respectively, and there exists an invertible matrix $M_C$ such that
\[
A_CM_C=M_CA_C,
\quad
\chi(U_i^1)M_C=M_C\chi(U_i^2)
\quad(1\le i\le t).
\]
By Lemma~\ref{lem:colored-cycle}, there is an automorphism $\varphi$ of $C$ satisfying $\varphi(U_i^1)=U_i^2$ for every $i$.

For each $i$ and each $v\in U_i^1$, let $T_i^v$ be the copy of $T_i$ attached at $v$ in $G_1$, and let $T_i^{\varphi(v)}$ be the copy of $T_i$ attached at $\varphi(v)$ in $G_2$.
Choose a root-preserving isomorphism
\[
\iota_{i,v}:T_i^v\longrightarrow T_i^{\varphi(v)}.
\]
Define a map $\psi:V(G_1)\to V(G_2)$ by setting $\psi(v)=\varphi(v)$ for $v\in V(C)$ and $\psi(x)=\iota_{i,v}(x)$ when $x$ lies in the copy $T_i^v$ attached at $v$.
This is well-defined because the root of $T_i^v$ is identified with $v$, and $\iota_{i,v}$ sends that root to $\varphi(v)$.

The map $\psi$ is bijective.
It preserves edges on the cycle because $\varphi$ is a cycle automorphism, and it preserves all edges inside the attached rooted trees because each $\iota_{i,v}$ is a graph isomorphism.
These are all edges of a unicyclic graph decomposed into its cycle and attached rooted trees.
Hence $\psi$ is a graph isomorphism from $G_1$ to $G_2$.
\end{proof}

\end{document}